\documentclass{article}

\usepackage{graphicx} 
\usepackage{amsmath,amsfonts,amssymb,amsthm,amscd}
\usepackage{datetime}
\usepackage{url}
\usepackage{hyperref}
\usepackage[usenames,dvipsnames]{xcolor}
\hypersetup{
    colorlinks,
    linkcolor={black!63!darkgray},
    citecolor={blue!70!white},
    urlcolor={blue!80!white}
}

\usepackage{geometry}
\newgeometry{top=0.75in, bottom=25mm, left=25mm, right=25mm, heightrounded, marginparwidth=0in, marginparsep=0.15in}

\usepackage{subcaption}
\numberwithin{figure}{section}
\numberwithin{table}{section}

\usepackage{framed} 
\usepackage{ulem}
\usepackage[T1]{fontenc}
\usepackage{pbsi}
\usepackage{mathabx}
\usepackage{dsfont}

\usepackage{enumitem}
\setlist[itemize]{leftmargin=0.65in}

\theoremstyle{plain} 
\newtheorem{theorem}{Theorem}

\numberwithin{theorem}{section}
\newtheorem*{theorem*}{Theorem}
\newtheorem*{conjecture*}{Conjecture}

\theoremstyle{definition} 

\newtheorem{remark}[theorem]{Remark}
\newtheorem{definition}[theorem]{Definition}

\newtheorem{ansatz}[theorem]{Assertion}

\theoremstyle{remark}
\newtheorem*{remark*}{Remark}

\renewcommand{\Re}{\operatorname{Re}}
\renewcommand{\Im}{\operatorname{Im}}

\newcommand{\mathtext}[1]{\text{\rm #1}}
\newcommand{\cf}{cf.~} 
\renewcommand{\emph}[1]{\textit{#1}}

\newcommand{\Iverson}[1]{\ensuremath{\left[#1\right]_{\delta}}}

\newcommand{\seqnum}[1]{\href{http://oeis.org/#1}{\color{ProcessBlue}{\underline{#1}}}}

\title{Picking up the partial sums of the M\"{o}bius function problem with probabilistic number theory}
\author{Dr. Maxie Dion Schmidt \\ \url{maxieds@gmail.com}}
\date{\small\it \today \ @\ \hhmmsstime{}}

\allowdisplaybreaks

\begin{document}

\maketitle

\begin{abstract} 
We revisit several hybrid multiplicative-to-additive type functions 
from a recent preprint article. 
These functions, $g(n)$ with Dirichlet generating function (DGF) 
$\zeta(s)^{-1} (1+P(s))^{-1}$ for $\Re(s) > 1$ where 
$P(s) = \sum_p p^{-s}$ is the prime zeta function, 
$|g(n)| = \lambda(n) g(n)$ with DGF 
$\zeta(2s)^{-1}(1-P(s))^{-1}$, 
and $C_{\Omega}(n)$ with DGF $(1-P(s))^{-1}$. 
Each of these function variants are defined in terms of the 
additive (respectively, strongly additive) 
functions $\omega(n)$ and $\Omega(n)$. These two auxiliary functions 
are used in the prior manuscript to relate partial sums of the 
classical M\"{o}bius function, $\mu(n)$, to signed partial sums involving 
the prime counting function, $\pi(x)$, and the Liouville lambda function, 
$\lambda(n) := (-1)^{\Omega(n)}$. 

In this article, we explore summing the identities from 
the first manuscript using several probabilistic assumptions about the 
independence of the values of $\Omega(n)$ and $\mu^2(n)$ for 
$n \leq x$ at large $x$. 
We recover proofs of the limiting asymptotic growth of $|M(x)| / \sqrt{x}$ 
whose hypotheses promise to be substantially more attainable to make 
rigorous than past results from other authors relying on the 
Riemann Hypothesis or assumption of the linear independence of the simple, 
non-trivial zeros of $\zeta(s)$. 

\medskip\noindent
\textbf{Keywords and phrases:} \emph{ 
M\"{o}bius function; Mertens function; Liouville lambda function; 
prime omega function; Dirichlet inverse; probabilistic number theory; 
Erd\'{o}s-Kac theorem. 
} \\ 
\textbf{Mathematics subject classifications (MSC 2010):} 
\emph{11A25; 11N37; 11N60; 11N64; and 11K65.} 
\end{abstract}

\section{Introduction}

\subsection{Preliminaries: The Mertens function}

\subsubsection{Definitions}

\begin{definition}[Canonical additive number theoretic functions]
\label{def_PrimeOmegaFuncs_v1}
For integers $n \geq 2$, we define the strongly and 
completely additive functions, respectively, 
that count the number of prime divisors of $n$ by 
\begin{align*}
\omega(n) & := \sum_{p|n} 1, \mathrm{\ and\ } 
\Omega(n) := \sum_{p^{\alpha} \mid\mid n} \alpha. 
\end{align*}
That is, if $n = p_1^{\alpha_1} \times \cdots p_r^{\alpha_r}$ is the 
factorization of $n$ into powers of distinct primes, then 
$\omega(n) = r$ and $\Omega(n) = \alpha_1 + \cdots + \alpha_r$. 
We use the convention that the functions $\omega(1) = \Omega(1) = 0$. 
\end{definition}

\begin{definition}
The M\"{o}bius function is the multiplicative function defined by 
\cite[\seqnum{A008683}]{OEIS}
\[
\mu(n) := \begin{cases} 
	1, & \text{ if $n = 1$}; \\ 
	(-1)^{\omega(n)}, & \text{ if $n \geq 2$ and $\omega(n) = \Omega(n)$ (i.e., if $n$ is squarefree);} \\ 
	0, & \text{ otherwise.}
        \end{cases}
\]
The Mertens function is defined to be the partial sum 
\cite[\seqnum{A002321}]{OEIS} 
\begin{align} 
M(x) & := \sum_{n \leq x} \mu(n), \mathrm{\ for\ } x \geq 1. 
\end{align} 
The Liouville lambda function is the completely multiplicative function 
defined for all $n \geq 1$ by $\lambda(n) := (-1)^{\Omega(n)}$ 
\cite[\seqnum{A008836}]{OEIS}. 
We have that $\lambda(n) = \mu(n)$ whenever $n \geq 1$ is squarefree. 
\end{definition}

\subsubsection{Analytic formulas, upper bounds and 
               conjectures on the limiting behavior of $M(x)$} 
\label{subSection_Intro_Mx_properties} 

A conventional approach to evaluating the limiting asymptotic 
behavior of $M(x)$ for large $x \rightarrow \infty$ considers an 
inverse Mellin transformation of the reciprocal of the Riemann zeta function. 
In particular, since 
\[
\frac{1}{\zeta(s)} = \prod_{p} \left(1 - \frac{1}{p^s}\right) = 
    \int_1^{\infty} \frac{s M(x)}{x^{s+1}} dx, \text{\ for\ } \Re(s) > 1, 
\]
we obtain by inversion that 
\[
M(x) = \lim_{T \rightarrow \infty}\ \frac{1}{2\pi\imath} \int_{T-\imath\infty}^{T+\imath\infty} 
     \frac{x^s}{s \zeta(s)} ds. 
\] 
The previous two representations lead to the next 
exact expression for $M(x)$ at any real $x > 0$. 
\nocite{TITCHMARSH} 

\begin{theorem}[An Analytic Formula for $M(x)$, Titchmarsh] 
\label{theorem_MxMellinTransformInvFormula} 
Assuming the Riemann Hypothesis (RH), there exists an infinite sequence 
$\{T_k\}_{k \geq 1}$ satisfying $k \leq T_k \leq k+1$ for each $k$ 
such that for any real $x > 0$ 
\begin{align*}
M(x) & = \lim_{k \rightarrow \infty} 
     \sum_{\substack{\rho: \zeta(\rho) = 0 \\ |\Im(\rho)| < T_k}} 
     \frac{x^{\rho}}{\rho \zeta^{\prime}(\rho)} - 2 + 
     \sum_{n \geq 1} \frac{(-1)^{n-1}}{n (2n)! \zeta(2n+1)} 
     \left(\frac{2\pi}{x}\right)^{2n} + 
     \frac{\mu(x)}{2} \Iverson{x \in \mathbb{Z}^{+}}. 
\end{align*} 
\end{theorem}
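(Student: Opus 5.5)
Our plan is the classical contour-shifting argument (Titchmarsh, \S 14.27), applied to Perron's formula for $M(x)$. We assume RH together with simplicity of the zeros, so that $\rho\zeta'(\rho)\neq 0$ and each residue at $s=\rho$ is $x^\rho/(\rho\zeta'(\rho))$. (Simplicity is implicit in the statement, since the formula uses the first-order residues $x^{\rho}/(\rho\zeta'(\rho))$.)

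\textbf{Truncated Perron integral.} Fix $c = 1 + 1/\log x$ and start from the truncated Perron formula
\[
M(x) - \tfrac{\mu(x)}{2}\Iverson{x\in\mathbb{Z}^+} = \frac{1}{2\pi\imath}\int_{c-\imath T}^{c+\imath T}\frac{x^s}{s\zeta(s)}\,ds + O\!\left(\frac{x\log x}{T} + \ldots\right).
\]
The half-weight at integer $x$ comes from the discontinuity of the Perron kernel. The error term tends to $0$ as $T\to\infty$ for fixed $x$.

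\textbf{Moving the contour.} Replace the segment by a rectangle with left edge at $\Re(s) = -U$, where $U = 2N+1$ is an odd integer chosen so that the edge stays away from the trivial zeros $-2n$. By the residue theorem, the enclosed residues are:
\begin{itemize}
\item at $s=0$: $1/\zeta(0) = -2$;
\item at each nontrivial $\rho$ with $|\Im\rho|<T$: $x^\rho/(\rho\zeta'(\rho))$;
\item at $s=-2n$: $x^{-2n}/(-2n\,\zeta'(-2n))$.
\end{itemize}
Differentiating the functional equation at the trivial zeros gives $\zeta'(-2n) = (-1)^n (2n)!\,\zeta(2n+1)/(2(2\pi)^{2n})$. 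Substituting this turns the last family of residues into the stated series $\sum_n (-1)^{n-1}(2\pi/x)^{2n}/(n(2n)!\zeta(2n+1))$, which converges absolutely for every $x>0$.

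\textbf{Remaining edges.} The left vertical edge vanishes as $U\to\infty$. On $\Re s=-U$ the functional equation gives $|1/\zeta(s)| \ll (2\pi)^{U}/\Gamma(U+1)$ roughly, which beats $x^{-U}$ once $U$ is large relative to $x$.

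The horizontal edges are the main obstacle. We must choose $T=T_k\in[k,k+1]$ so that $1/\zeta(\sigma+\imath T_k)$ is controlled uniformly for $-1\le\sigma\le 2$. The left part $\sigma\le-1$ is again handled by the functional equation.
\begin{itemize}
\item The standard input (Titchmarsh Thm.~14.16, which uses RH) is that for each $k$ there is $T_k\in[k,k+1]$ with $|\zeta(\sigma+\imath T_k)|^{-1} \ll T_k^{\varepsilon}$ on the strip.
\item To get it, we first write $\log\zeta(s) = \sum_{|t-\gamma|<1/\log\log t}\log(s-\rho) + O(\log t/\log\log t)$.
\item Pigeonholing among the $O(\log k)$ ordinates in $[k,k+1]$ then gives a $T_k$ at distance $\gg 1/\log k$ from every $\gamma$.
\item This yields $|1/\zeta| \ll \exp(C\log T_k/\log\log T_k) = T_k^{o(1)}$.
\end{itemize}
Then each horizontal integral is $\ll T_k^{-1+\varepsilon}\int_{-1}^{c} x^\sigma\,d\sigma \to 0$.

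Letting $k\to\infty$ along $T_k$ then yields the stated formula. Convergence of the zero sum holds only in this bracketed sense, which is why the limit over $T_k$ is built into the statement.
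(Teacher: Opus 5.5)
The paper gives no proof of this statement; it is quoted from Titchmarsh. Your outline is his \S 14.27 argument, and the bookkeeping is correct: the residue $-2$ at $s=0$, the trivial-zero series via $\zeta'(-2n)$, and the half-weight at integer $x$.

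\textbf{The gap.} The step that fails as written is your derivation of the key bound on the lines $t=T_k$. Your pigeonhole only guarantees that $T_k$ is at distance $\gg 1/\log k$ from every ordinate. That bounds each of the $n(T_k)\ll \log T_k/\log\log T_k$ terms of your local expansion by $\log|s-\rho|\ge -\log\log T_k-O(1)$. Summing gives only $\log|\zeta(\sigma+\imath T_k)|\ge -A\log T_k$, i.e.\ $|1/\zeta(\sigma+\imath T_k)|\le T_k^{A}$. Here $A$ comes from the implied constant in $S(t)\ll \log t/\log\log t$. You do not get $\exp(C\log T_k/\log\log T_k)$ from this.

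The minimum-distance information gives no control if $\asymp \log T/\log\log T$ ordinates lie at distance $\asymp 1/\log T$ from the chosen point, and RH together with $S(t)\ll\log t/\log\log t$ does not exclude this. The horizontal sides are bounded only by $T_k^{-1}x^{c}\max_\sigma|1/\zeta(\sigma+\imath T_k)|$. So a bound $T_k^{A}$, with possibly $A\ge 1$, does not make them vanish, and the argument does not close as written.

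\textbf{The repair.} Average over $t$ instead of pigeonholing; this is how Theorem 14.16 is actually proved. Put $h=1/\log\log k$ and integrate $\sum_{|t-\gamma|<h}\log|t-\gamma|$ over $t\in[k,k+1]$. The integrand is negative, and each of the $O(\log k)$ ordinates within $h$ of $[k,k+1]$ contributes at least $\int_{-h}^{h}\log|u|\,du=-2h(\log(1/h)+1)$. Hence some $T_k\in[k,k+1]$ satisfies
$\sum_{|T_k-\gamma|<h}\log|T_k-\gamma|\ge -A\log k\,\log\log\log k/\log\log k=-o(\log k)$.
Then $\log|s-\rho|\ge\log|T_k-\gamma|$ gives $|1/\zeta(\sigma+\imath T_k)|\le T_k^{o(1)}$ uniformly for $\frac{1}{2}\le\sigma\le 2$, which is enough.

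As an identity with error $O(\log t/\log\log t)$, your expansion should have its terms normalized as $\log\{(s-\rho)\log\log t\}$. Your unnormalized version is still valid as a lower bound for $\log|\zeta|$. With the normalization, the same averaging gives exactly your $\exp(C\log T_k/\log\log T_k)$.

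\textbf{Smaller points.} The bound above covers only $\sigma\ge\frac{1}{2}$. For $-1\le\sigma<\frac{1}{2}$ use $|1/\zeta(\sigma+\imath T)|\asymp T^{\sigma-1/2}|1/\zeta(1-\sigma+\imath T)|$. Also, the choice $c=1+1/\log x$ is unusable for $x\le 1$, while the statement is claimed for all $x>0$; since $x$ is fixed, take $c=2$.
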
 

A historical unconditional bound on the Mertens function due to 
Walfisz (circa 1963) 
states that there is an absolute constant $C_1 > 0$ such that 
\[
M(x) \ll x \cdot \exp\left(-C_1 \cdot \log^{\frac{3}{5}}(x) 
     (\log\log x)^{-\frac{3}{5}}\right). 
\]
Under the assumption of the RH, Soundararajan improved estimates 
bounding $M(x)$ from above for large $x$ and any $\epsilon > 0$ 
in the following form \cite{SOUND-MERTENS-ANNALS}: 
\begin{align*} 
M(x) & = O\left(\sqrt{x} \cdot \exp\left( 
     (\log x)^{\frac{1}{2}} (\log\log x)^{\frac{5}{2}+\epsilon}\right)\right). 
\end{align*} 

The RH is in fact equivalent to showing that 
$M(x) = O\left(x^{\frac{1}{2}+\epsilon}\right)$ for any 
$0 < \epsilon < \frac{1}{2}$. 
There is a rich history to the original statement of the 
\emph{Mertens conjecture} which 
asserts that for some absolute constant $C_2 > 0$, 
\[ 
|M(x)| < C_2 \sqrt{x}. 
\] 
The conjecture was first 
verified by Mertens himself to hold when $C_2 = 1$ for all $x < 10000$ 
without the benefit of modern computation. 
Since its beginnings in 1897, the Mertens conjecture was disproved 
by computational methods utilizing 
non-trivial simple zeta function zeros with comparatively small imaginary parts in the famous paper by 
Odlyzko and t\'{e} Riele \cite{ODLYZ-TRIELE}. 
More recent attempts 
at bounding $M(x)$ naturally consider determining the rates at which the 
oscillatory function $q(x) := M(x) / \sqrt{x}$ 
grows with (or without) bound along infinite 
subsequences, e.g., considering the asymptotics of $q(x)$ 
in the limit supremum and limit infimum senses. 

It is verified by computation 
that \cite[\cf \S 4.1]{PRIMEREC} 
\cite[\cf \seqnum{A051400}; \seqnum{A051401}]{OEIS} 
\[
M_{+} := 
     \limsup_{x\rightarrow\infty} \frac{M(x)}{\sqrt{x}} > 1.060\ \qquad \ \ \ \ 
     (\text{more recently: } M_{+} \geq 1.826054), 
\] 
and 
\[ 
M_{-} := \liminf_{x\rightarrow\infty} \frac{M(x)}{\sqrt{x}} \ \ \ \ 
     < -1.009\ \qquad\ \ (\text{more recently: } M_{-} \leq -1.837625). 
\] 
Based on work by Odlyzyko and t\'{e} Riele, it seems likely that 
each of these limits should evaluate to $\pm \infty$, respectively 
\cite{ODLYZ-TRIELE,MREVISITED,ORDER-MERTENSFN,HURST-2017}. 
A famous conjecture due to Gonek asserts that in fact the absolute value of 
$M(x)$ is constant scaled as in the following limit \cite{NG-MERTENS}:
\[
\limsup_{x \rightarrow \infty} \frac{|M(x)|}{ 
     \sqrt{x} (\log\log\log x)^{\frac{5}{4}}} < +\infty.
\]

\subsubsection{Remarks on probabilistic underpinnings: 
               Randomized models of the M\"obius function}

Some natural probabilistic models of the 
M\"obius function lead us to consider the behavior of $M(x)$ 
as a sum of independent and identically distributed (iid) random variables. 
Suppose that $\{M_n\}_{n \geq 1}$ is a sequence of iid 
$\{0,\pm 1\}$-valued random variables 
such that for all $n \geq 1$ 
$$\mathbb{P}[M_n = -1] = \mathbb{P}[M_n = +1] = \frac{3}{\pi^2}, 
  \mathtext{ and } \mathbb{P}[M_n = 0] = 1 - \frac{6}{\pi^2}.$$ 
That is, the sequence $\{M_n\}_{n \geq 1}$ provides a randomized model of the 
values of $\mu(n)$ on average. 
We may approximate limiting properties of the partial sums as 
$M(x) \cong \widebar{M}_x$ where 
$\widebar{M}_x := \sum_{n \leq x} M_n$ for $x \geq 1$. 
This viewpoint models predictions of certain limiting asymptotic behavior of the 
Mertens function. 
Under this probabilistic approximation to the Mertens function, 
we can compute the following results 
\cite[Theorem 9.4; \S 9]{BILLINGSLY-PROB-AND-MEASURE-BOOK} 
\cite[\cf \S 1]{ODLYZ-TRIELE}: 
\[
\mathbb{E}\left[\widebar{M}_x\right] \sim 0, 
     \operatorname{Var}\left[\widebar{M}_x\right] \sim \frac{6x}{\pi^2}, 
     \mathtext{ and } 
     \limsup_{x \rightarrow \infty} 
     \frac{\left\lvert \widebar{M}_x\right\rvert}{\sqrt{x \log\log x}} = 
     \frac{2\sqrt{3}}{\pi} \approx 1.1026578 
     \mathtext{ (almost surely).} 
\]
The limiting constant on the right-hand-side of the previous equation 
follows by an application of the law of the iterated logarithm in probability theory.

\subsection{Revisiting functions with Erd\'{o}s-Kac-type 
            central limit theorems}
\label{subSection_Restating_MDS-MERTENS_article_results}

\subsubsection{A somewhat lucky strategy}
The next definitions and identities for key auxiliary functions stated in 
this section are used to expand new exact formulas for the 
Mertens function in the reference. These preliminary results are 
summarized from the proofs in the 2022--2026 preprint article 
\cite{MDS-MERTENS}. 
Additive functions such as the prime omega function 
variants in Definition \ref{def_PrimeOmegaFuncs_v1} 
are special to work with because they often come 
bundled with central limit theorems characterizing the distribution of 
these functions for $n \leq x$ as $x \rightarrow \infty$. 
The curious and noteworthy convolution identity that 
$\chi_{\mathbb{P}} = \omega \ast \mu$ is simply too good to pass up as it allows 
us to relate the partial sums, $M(x)$, to signed additive function variants and 
the distribution of the prime numbers whose indicator function is 
$\chi_{\mathbb{P}}$ by applying inversion formulas for partial sums of 
convolutions \cite[\cf Theorem 1.3]{MDS-MERTENS}. 
We win, and seemingly win big, here in the next sections by defining the 
auxiliary component functions, 
$g(n)$ and $C_{\Omega}(n)$, and their 
partial sums in terms of the canonical additive number theoretic functions. 

\subsubsection{Recalling key auxiliary functions and results} 

\begin{definition}[Dirichlet convolution and Dirichlet inverse functions]
For any arithmetic functions $f$ and $h$, we define their 
Dirichlet convolution at $n$ by the divisor sum 
\[
(f \ast h)(n) := \sum_{d|n} f(d) h\left(\frac{n}{d}\right), 
     \mathrm{\ for\ } n \geq 1.
\]
The arithmetic function $f$ has a unique inverse with respect to Dirichlet convolution, 
denoted by $f^{-1}$, if and only if $f(1) \neq 0$. 
When it exists, the Dirichlet inverse of $f$ satisfies 
$\varepsilon(n) := (f \ast f^{-1})(n) = (f^{-1} \ast f)(n) = \delta_{n,1}$, the 
multiplicative identity function with respect to Dirichlet convolution, which is 
one if and only if $n = 1$ and is zero-valued otherwise. 
\end{definition}

\begin{definition}
The Dirichlet invertible function $g(n)$
is defined in terms of the shifted additive function 
$\omega(n)$ that counts the 
number of distinct prime factors of $n$ without multiplicity. 
In particular, motivated by the convolution identity that 
$\chi_{\mathbb{P}} = \omega \ast \mu$ is the 
characteristic function of the primes, 
we define the Dirichlet inverse function \cite[\seqnum{A341444}]{OEIS} 
\begin{equation}
\label{eqn_gInvn_def_v1}
g(n) := (\omega + \mathds{1})^{-1}(n), \mathrm{\ for\ } n \geq 1. 
\end{equation}
The inverse function in equation \eqref{eqn_gInvn_def_v1} 
is computed recursively by applying the formula \cite[\S 2.7]{APOSTOLANUMT}
\[
g(n) = \begin{cases}
	1, & \text{ if $n = 1$; } \\ 
	-\sum\limits_{\substack{d|n \\ d> 1}} \left(\omega(d) + 1\right) g\left(\frac{n}{d}\right), & 
	\text{ if $n \geq 2$. }
        \end{cases}
\]
\end{definition}
The motivation for the next definition of the arithmetic function, 
$C_{\Omega}(n)$, expanded below is that for all $n \geq 1$ 
\begin{equation*}
\lambda(n) C_{\Omega}(n) = (g \ast 1)(n) = \left( 
     \chi_{\mathbb{P}} + \varepsilon\right)^{-1}(n), 
\end{equation*}
where $\varepsilon(n) := \delta_{n,1}$ is the identity function with respect to 
Dirichlet convolution. 

\begin{definition}
Let the auxiliary component function, $C_{\Omega}(n)$, be defined for all 
$n \geq 1$ by the variant multinomial coefficient expansion 
\begin{equation}
\label{eqn_COmegan_product_exp_def}
C_{\Omega}(n) := (\Omega(n))! \times \prod_{p^{\alpha} || n} \frac{1}{\alpha!}. 
\end{equation}
\end{definition}

\begin{theorem}
\label{theorem_gn_key_idents}
The notation $|g(n)| = \lambda(n) g(n)$ denotes the absolute value of $g(n)$. 
We restate the following identities for the signed and unsigned magnitude 
of the function $g(n)$ for all $n \geq 1$:
\begin{subequations}
\begin{align}
\lambda(n) g(n) & = 
     \sum_{d|n} C_{\Omega}(d) \mu^2\left(\frac{n}{d}\right), \\ 
g(n) & = \sum_{d|n} \lambda(d) C_{\Omega}(d) \mu\left(\frac{n}{d}\right). 
\end{align}
Moreover, for any squarefree $n \geq 1$, we have that 
\begin{equation}
g(n) = (-1)^{\omega(n)} \times \sum_{0 \leq m \leq \omega(n)} 
     \binom{\omega(n)}{m} \times m!. 
\end{equation}
\end{subequations}
\end{theorem}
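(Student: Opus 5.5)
The plan is to work throughout with Dirichlet generating functions, or equivalently with formal identities in the ring of arithmetic functions under $\ast$. I take as the basic input the convolution identity $\omega \ast \mu = \chi_{\mathbb{P}}$ recalled above; it holds because $\omega = \chi_{\mathbb{P}} \ast \mathds{1}$. From it,
\[
\omega + \mathds{1} = \chi_{\mathbb{P}} \ast \mathds{1} + \varepsilon \ast \mathds{1} = (\chi_{\mathbb{P}} + \varepsilon) \ast \mathds{1},
\]
so
\[
g = (\omega + \mathds{1})^{-1} = \mu \ast (\chi_{\mathbb{P}} + \varepsilon)^{-1},
\]
which has DGF $\zeta(s)^{-1}(1+P(s))^{-1}$. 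The whole proof then reduces to identifying $(\chi_{\mathbb{P}}+\varepsilon)^{-1}$ as $\lambda C_{\Omega}$. Equivalently, $(1-P(s))^{-1}$ should be the DGF of $C_{\Omega}$.

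For this key step I expand the geometric series $(1-P(s))^{-1} = \sum_{k \geq 0} P(s)^k$, valid for $\Re(s)$ large where $|P(s)|<1$. The coefficient of $n^{-s}$ in $P(s)^k$ counts ordered $k$-tuples of primes whose product is $n$. This is nonzero only when $k = \Omega(n)$, and then it equals the multinomial coefficient $\Omega(n)!/\prod_{p^\alpha \| n} \alpha!$. That is exactly $C_{\Omega}(n)$ from \eqref{eqn_COmegan_product_exp_def}.

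Replacing $P$ by $-P$ gives $(1+P(s))^{-1} = \sum_k (-1)^k P(s)^k$, whose coefficients are $(-1)^{\Omega(n)} C_{\Omega}(n) = \lambda(n) C_{\Omega}(n)$. Hence $g = \mu \ast (\lambda C_{\Omega})$, which is the second identity. It also confirms the stated motivation $\lambda C_{\Omega} = g \ast \mathds{1}$.

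For the first identity, I multiply pointwise by $\lambda$. Since $\lambda$ is completely multiplicative, $\lambda \cdot (f \ast h) = (\lambda f) \ast (\lambda h)$. Therefore
\[
\lambda g = (\lambda \mu) \ast (\lambda^2 C_{\Omega}) = \mu^2 \ast C_{\Omega},
\]
using $\lambda\mu = \mu^2$ and $\lambda^2 = 1$. The same computation gives the DGF $\zeta(2s)^{-1}(1-P(s))^{-1}$, because $\sum \mu^2(n) n^{-s} = \zeta(s)/\zeta(2s)$. One point needs a short argument: the notation $|g(n)| = \lambda(n) g(n)$ asserts that $\lambda g \geq 0$. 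This follows immediately from the first identity, whose right-hand side is a sum of nonnegative terms.

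For squarefree $n$ with $\omega(n) = r$, every divisor $d$ of $n$ is squarefree. Then $C_{\Omega}(d) = \omega(d)!$, $\lambda(d) = (-1)^{\omega(d)}$, and $\mu(n/d) = (-1)^{r - \omega(d)}$. Grouping the divisors by $m = \omega(d)$, of which there are $\binom{r}{m}$, the second identity gives
\[
g(n) = (-1)^r \sum_{m=0}^{r} \binom{r}{m} m!.
\]
The signs combine to $(-1)^{m} (-1)^{r-m} = (-1)^r$ uniformly in $m$.

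I expect the main obstacle to be the combinatorial identification of the coefficients of $P(s)^k$ with the multinomial count. This needs care about ordered versus unordered factorizations and about convergence. The cleanest route is to treat everything as formal Dirichlet series, so that inverse identities follow from coefficient comparison alone.
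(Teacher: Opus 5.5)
Your argument is correct and follows the route the paper indicates. The paper gives no proof here and defers to \cite{MDS-MERTENS}, but it records exactly the ingredients you derive, namely $\lambda C_{\Omega} = g \ast \mathds{1} = (\chi_{\mathbb{P}}+\varepsilon)^{-1}$ and the generating functions $\zeta(s)^{-1}(1+P(s))^{-1}$ and $(1-P(s))^{-1}$; both convolution identities and the squarefree formula then follow as you show.

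One side remark needs fixing, though the theorem does not depend on it. By your own computation, $\lambda g = \mu^2 \ast C_{\Omega}$ has DGF $\zeta(s)\zeta(2s)^{-1}(1-P(s))^{-1}$, not $\zeta(2s)^{-1}(1-P(s))^{-1}$ as you and the paper write. For example, $|g(p)| = 2$, while the latter series has coefficient $1$ at $p$.
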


\begin{definition}
The summatory function of $g(n)$ is defined as follows 
\cite[\seqnum{A341472}]{OEIS}: 
\begin{equation}
\label{eqn_GInvx_PartialSumForms_v1} 
G(x) := \sum_{n \leq x} g(n), \mathtext{ for } x \geq 1. 
\end{equation} 
\end{definition}

\begin{theorem}
The Mertens function, $M(x)$, 
is related to the summatory function, $G(x)$, and 
the prime counting function, $\pi(x)$, through the following identities: 
\begin{subequations}
\begin{align}
\label{eqn_Mx_Gx_PartialSumGxOverp}
M(x) & = G(x) + \sum_{k \leq x} g(k) 
     \pi\left(\left\lfloor \frac{x}{k} \right\rfloor\right), \\ 
\label{eqn_Mx_Gx_PartialSumGxOverp_v2}
M(x) & = G(x) + \sum_{p \leq x} G\left(\left\lfloor \frac{x}{p} \right\rfloor\right). 
\end{align}
\end{subequations}
\end{theorem}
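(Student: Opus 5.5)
The plan is to prove both identities at the level of Dirichlet convolutions, and only afterwards pass to partial sums. By definition $g = (\omega + \mathds{1})^{-1}$. Write $\mathds{1} = 1$ for the constant function. The convolution identity $\chi_{\mathbb{P}} = \omega \ast \mu$ gives $\omega = \chi_{\mathbb{P}} \ast 1$, and therefore
\[
\omega + 1 = \chi_{\mathbb{P}} \ast 1 + \varepsilon \ast 1 = (\chi_{\mathbb{P}} + \varepsilon) \ast 1 .
\]
Taking Dirichlet inverses, using that inversion reverses products in this commutative ring and that $1^{-1} = \mu$, we get
\[
g = \mu \ast (\chi_{\mathbb{P}} + \varepsilon)^{-1}.
\]
Equivalently, $\mu = g \ast (\chi_{\mathbb{P}} + \varepsilon) = g + g \ast \chi_{\mathbb{P}}$. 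This is the key pointwise identity: for every $n \geq 1$,
\[
\mu(n) = g(n) + \sum_{p \mid n} g\!\left(\frac{n}{p}\right).
\]
I will verify $\chi_{\mathbb{P}} = \omega \ast \mu$ directly if needed: $(\chi_{\mathbb{P}} \ast 1)(n) = \#\{p \mid n\} = \omega(n)$.

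Next, I sum the pointwise identity over $n \leq x$. The first term gives $G(x)$. For the second, I write $n = pk$ and swap the order of summation:
\[
\sum_{n \leq x} \sum_{p \mid n} g(n/p) = \sum_{p \leq x} \sum_{k \leq x/p} g(k) = \sum_{p \leq x} G\!\left(\left\lfloor x/p \right\rfloor\right).
\]
This yields \eqref{eqn_Mx_Gx_PartialSumGxOverp_v2}.

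Alternatively, I sum over $k$ first. The condition $pk \leq x$ is equivalent to $p \leq x/k$, so the count of admissible primes is $\pi(x/k) = \pi(\lfloor x/k \rfloor)$, since $\pi$ is constant between integers. This gives
\[
\sum_{k \leq x} g(k)\, \pi\!\left(\left\lfloor x/k \right\rfloor\right),
\]
which proves \eqref{eqn_Mx_Gx_PartialSumGxOverp}.

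There is no real obstacle. The only care needed is the bookkeeping of Dirichlet inverses: confirming that $\chi_{\mathbb{P}} + \varepsilon$ is invertible, which holds since its value at $1$ is $1$. One must also make sure the inverse of a product is the product of the inverses, which follows from commutativity and associativity of $\ast$. The floor functions are harmless because $\pi$ and $G$ are step functions in their arguments.
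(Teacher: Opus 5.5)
Your proof is correct and follows essentially the paper's route. The paper, citing its earlier preprint, gets these identities by applying the partial-sum inversion formula for Dirichlet convolutions to $(\omega+\mathds{1})\ast\mu = \chi_{\mathbb{P}}+\varepsilon$; this is exactly your pointwise identity $\mu = g\ast(\chi_{\mathbb{P}}+\varepsilon)$ summed over $n\le x$, with the second identity obtained, as you do, by interchanging the sums over $k$ and $p$ in $\sum_{k\le x} g(k)\,\pi(\lfloor x/k\rfloor)$.
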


The Dirichlet generating function (DGF) of $g(n)$ is $\zeta(s)^{-1} (1+P(s))^{-1}$ 
for $\Re(s) > 1$ where $P(s) = \sum_p p^{-s}$ is the prime zeta function. 
The unsigned function $|g(n)|$ has 
DGF $\zeta(2s)^{-1}(1-P(s))^{-1}$ 
and the related component function $C_{\Omega}(n)$ has DGF 
$(1-P(s))^{-1}$ for $\Re(s) > 1$. 
Asymptotic formulae for each of these three respective functions 
are difficult to approach directly via traditional analytic methods 
because of the natural boundary of $P(s)$ at the line $\sigma = 0$. 

\subsection{The independence of 
            special multiplicative functions}
\label{subSection_RemarksOnIndependence}

\begin{ansatz}[Independence of certain arithmetic functions]
\label{ansatz_Independence_props_v1}
The following properties hold: 
\begin{itemize}
\item[(IH-A)] 
      For any $n, k \leq x$, as $x \rightarrow \infty$, we 
      have that the following holds: 
      \[
      \mathbb{P}\left(\mu^2(n) \neq 0 \ |\ \Omega(n) = k\right) = 
       \mathbb{P}\left(\mu^2(n) \neq 0\right).
      \]
\item[(IH-B)] 
           The $n \geq 1$ for which $\lambda(n) \mu^2(n) \neq 0$ are squarefree. 
           Hence, for such $n$, $\Omega(n) = \omega(n)$. 
           These squarefree $n \leq x$ have 
           a limiting asymptotic density, $d_0$ (defined as below), 
           as $x \rightarrow \infty$.
\item[(IH-C)] 
           If we select any $1 \leq n \leq x$ uniformly at random, then 
           $\mathbb{P}[\mu^2(n) \neq 0] = \frac{6}{\pi^2}$ as 
           $x \rightarrow \infty$. 
           That is to say that of the asymptotically $\frac{6x}{\pi^2}$ 
           squarefree integers $n \leq x$, it is equally likely 
           (with fixed weighted probability) that a randomly selected $n$ is 
           squarefree. This statement agrees with the average order of the 
           partial sums of $\mu^2(n)$ over $n \leq x$ being asymptotically 
           $\frac{6x}{\pi^2}$ for large $x$.

\end{itemize}
\end{ansatz}

\subsubsection{A few important remarks} 

\begin{remark}
Let $x \geq 1$ be large and suppose that $p \leq x$ is a fixed prime. 
Furthermore, suppose we label the integers $n \leq x$ as follows: 
$\mathcal{N}_x := \{n_1, n_2, \ldots, n_x\} = [1, x] \cap \mathbb{Z}$. 
Let the random variable $C_{p,x}$ be defined to be $\alpha$ if 
$p^{\alpha} || x$ for some $\alpha \geq 1$ and by zero if $p \nmid x$. 
If we select any fixed $N(x) \in \mathcal{N}_x$, 
i.e., we select a fixed integer in $[1, x]$ with 
uniform probability of $\frac{1}{x}$, then 
as $x \rightarrow \infty$, 
$C_{p,N(x)}$ converges in distribution to a geometric random variable
with fixed parameter $\frac{1}{p}$ 
\cite[\cf \S 1.2]{LOG-COMB-STRUCTS-BOOK}. 
That is, for any $k \geq 1$, we have that for large $x \rightarrow \infty$ 
\[
\mathbb{P}\left[C_{p,N(x)} = k\right] = \left(1 - p^{-1}\right) p^{-k}. 
\]
\end{remark}

\begin{remark}[Another key consequence of independence] 
\label{remark_IndepHypConsequence_for_COmegan_v1}
Since $\lambda(n)$ is completely multiplicative, i.e., we have that 
$\lambda(mn) = \lambda(m) \lambda(n)$ for any integers $m,n \geq 1$, 
we can use the identities for the function $g(n)$ restated in 
Theorem \ref{theorem_gn_key_idents} to show that 
\begin{equation}
\label{eqn_Gx_exp_starting_point_v1}
G(x) = \sum_{n \leq x} \lambda(n) C_{\Omega}(n) \times 
     \sum_{j \leq \left\lfloor \frac{x}{n} \right\rfloor} 
     \lambda(j) \mu^2(j), \text{\ for\ all\ } x \geq 1. 
\end{equation}
The function, $C_{\Omega}(n)$ whose distribution is central to 
bounding \eqref{eqn_Gx_exp_starting_point_v1}, 
depends only on the distribution of 
$\Omega(n) = k$ and the \emph{exponents} in the prime power 
factorization of $n$ into a product of powers of the $k$ 
distinct primes exactly dividing $n$ 
(\cf equation \eqref{eqn_COmegan_product_exp_def}). 
By our assertions on independence above, we see that the distribution of 
values of $C_{\Omega}(n)$ over $n \leq x$ are independent from whether 
any given $n \leq x$ is squarefree, 
i.e., that satisfies $\mu^2(n) \neq 0$. 
\end{remark} 

\subsubsection{Observations on (IH-A)}

The function $\mu^2$ is the characteristic function of the squarefree integers. 
For all $x \geq 1$, its summatory function satisfies \cite{HARDY-WRIGHT}
\begin{align*}
Q_{\operatorname{sq}}(x) & := \sum_{n \leq x} \mu^2(n) = 
     \frac{6x}{\pi^2} + O\left(\sqrt{x}\right), 
\end{align*}
though the error term may be improved by non-elementary methods. 
If $\mu^2(n) \neq 0$, i.e., if this $n \geq 1$ is squarefree, we find that 
$\lambda(n) = (-1)^{\omega(n)} = \mu(n)$. 
More to our point, we find that the following asymptotic densities of 
$\mu(n)$ taken over $n$ squarefree are asymptotically equally distributed 
between the two signed values $\mu(n) = \pm 1$: 
\begin{align*}
 & \lim_{x \rightarrow \infty}\ 
     \frac{\#\left\{n \leq x: \mu(n) = +1\right\}}{ 
     Q_{\operatorname{sq}}(x)} = 
     \lim_{x \rightarrow \infty}\ 
     \frac{\#\left\{n \leq x: \mu(n) = -1\right\}}{ 
     Q_{\operatorname{sq}}(x)} = \frac{1}{2}, \mathtext{ and } \\ 
 & \lim_{x \rightarrow \infty}\ \frac{1}{x} \times 
     \#\left\{n \leq x: \mu^2(n) \neq 0\right\} = \frac{6}{\pi^2}. 
\end{align*}
Thus, up to small, bounded error terms for finite $x$, it is equally likely 
that $\mu(n)$ assumes either $\pm 1$ conditioned on $n \leq x$ being squarefree. 


\subsubsection{Observations on (IH-B)}

The distribution of $\Omega(n) = k$ is regular and predictable for all 
$n \leq x$ at large $x$. If we define, the function $\mathcal{N}_m(x)$ for 
all $m \geq 0$ and $x \geq 2$ as \cite[\S 2.4]{MV} 
\[
\mathcal{N}_m(x) := \#\left\{n \leq x: \Omega(n) - \omega(n) = m\right\}, 
\] 
then we have that 
\[
\frac{\mathcal{N}_m(x)}{x} = d_m + O\left( 
     \left(\frac{3}{4}\right)^m \frac{(\log x)^{\frac{4}{3}}}{\sqrt{x}}\right). 
\]
The set $\{d_m\}_{m \geq 0}$ is a sequence of constants where each $d_m$ 
depends only on $m$. The result in the last equation is proved from the 
expansion of the next prime product generating function in the reference. 
\[
\mathcal{D}(z) = \sum_{m \geq 0} d_m z^m = \prod_p \left(1 - \frac{1}{p}\right) 
     \left(1 + \frac{1}{p - z}\right), \mathtext{\ \ for } \Re(z) \leq 1. 
\]

\section{Formulas for $M(x)$ given the independence hypotheses}
\label{Section_FormulasForMx_with_indep}

In this section we use the assumptions about independence hypothesized by  
Assertion \ref{ansatz_Independence_props_v1} 
to bound the oscillatory $G(x)$ by summing the formula expanded by 
equation \eqref{eqn_Gx_exp_starting_point_v1} below. 
We apply these new formulas bounding $G(x)$ from below at 
large $x$ to the identity in equation 
\eqref{eqn_Mx_Gx_PartialSumGxOverp_v2}. 
This approach yields the new bounds for $M(x)$ along large 
infinite monotone increasing subsequences of integers stated in 
Section \ref{subSection_BoundsOnMxUsingGx}. 

\subsection{Expanding the key formula for $G(x)$ from 
            \eqref{eqn_Gx_exp_starting_point_v1}}

\begin{remark}[Distribution of values of $\Omega(n)$] 
\label{remark_Omegan_eqk_density_formula_v1} 
We have uniformly for $1 \leq k \leq \log\log x$ that \cite[\S 7.4]{MV} 
\[
\frac{1}{x} \times \#\{3 \leq n \leq x: \Omega(n) = k\} = 
     \frac{(\log\log x)^{k-1}}{(\log x) (k-1)!} 
     + O\left(\frac{k}{(\log\log x)^2}\right), \mathtext{ as } x \rightarrow \infty. 
\] 
For $k := r \log\log x$ with $r > 1$, the density of the $\Omega(n) = k$ 
in the form of the previous equation 
are much smaller so that we can consider the sums over the 
intervals $k > \log\log x$ to contribute asymptotically 
negligible error terms. More precisely, for any $1 < r < 2$ we can prove that 
as $x \rightarrow \infty$ \cite[\cf Theorem 7.20]{MV}
\[
\frac{1}{x} \times \#\left\{3 \leq n \leq x: \Omega(n) > r \log\log x\right\} \ll 
     (\log x)^{r - 1 - r \log r}.
\]
Larger yet values of $\Omega(n)$ are found along the sequence of 
primorial integers \cite[\seqnum{A002110}]{OEIS}. 
In general, $\Omega(n) \leq \log_2 x$ for all $2 \leq n \leq x$. 
The distribution of $\Omega(n)$ at these comparatively rare large values of the 
function may be discarded in the analysis given below 
as they similarly yield negligible 
error terms compared to main terms that depend on $\Omega(n) = k$ for 
$1 \leq k \leq \log\log x$.
\end{remark}

\begin{subequations}
\begin{theorem}
For $1 \leq n \leq x$, we define the following sums: 
\begin{equation}
\widehat{Q}_{1,n}(x) := \sum_{j \leq x} \lambda(nj) \mu^2(j). 
\end{equation}
Suppose that the independence properties in the last sections hold. 
We have that 
\begin{align}
\widehat{Q}_{1,n}(x) & \sim 
     \frac{6x}{\pi^2} \cdot \frac{(-1)^{\lfloor \log\log x\rfloor}}{ 
     2 \sqrt{2\pi \log\log x}}. 
\end{align}
\end{theorem}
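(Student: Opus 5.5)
We work entirely under the independence model of Assertion \ref{ansatz_Independence_props_v1}. The first move is to use complete multiplicativity of $\lambda$ to factor out the dependence on $n$:
\[
\widehat{Q}_{1,n}(x) = \lambda(n) \sum_{j \leq x} \lambda(j)\mu^2(j) = \lambda(n) \sum_{j \leq x} \mu(j).
\]
Here we use that $\lambda(j) = \mu(j)$ on the support of $\mu^2$. So the whole problem reduces to a heuristic evaluation of the signed count of squarefree $j \leq x$. The factor $\lambda(n) = \pm 1$ only contributes a sign, which we absorb into the asymptotic; the statement should be read as describing the magnitude and the parity-driven sign of the $j$-sum.

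Next, we split the squarefree $j \leq x$ according to $\omega(j) = \Omega(j) = k$:
\[
\sum_{j \leq x}\lambda(j)\mu^2(j) = \sum_{k \geq 0} (-1)^k \#\{j \leq x : \Omega(j) = k,\ \mu^2(j) \neq 0\}.
\]
By (IH-A) and (IH-C), the conditional probability of being squarefree given $\Omega(j) = k$ equals $6/\pi^2$. Hence each count factors as $\tfrac{6}{\pi^2}\cdot \#\{j\leq x:\Omega(j)=k\}$, up to lower order terms. We then insert the local density from Remark \ref{remark_Omegan_eqk_density_formula_v1}. Writing $L = \log\log x$, this gives
\[
\frac{6x}{\pi^2}\sum_{1\leq k\leq L} (-1)^k \frac{L^{k-1}}{(\log x)(k-1)!} = \frac{6x}{\pi^2}\sum_{1\le k \le L}(-1)^k e^{-L}\frac{L^{k-1}}{(k-1)!}.
\]
We discard $k > L$ using the tail bound $(\log x)^{r-1-r\log r}$ quoted in the same remark.

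The resulting sum is an alternating sum of Poisson$(L)$ probabilities truncated at the mean $k \approx L$. The full alternating sum would be $e^{-2L}$, which is negligible. Truncation at the mode, however, leaves a boundary term. Pairing consecutive terms $(-1)^k(p_{k-1}-p_{k-2})$ telescopes the sum to roughly half of the last term, with the sign of that endpoint. The last term is $(-1)^{\lfloor L\rfloor}\, p_{\lfloor L\rfloor -1}$. By Stirling's formula (or the local central limit theorem), $p_{\lfloor L\rfloor-1} \sim (2\pi L)^{-1/2}$. This gives the claimed
\[
\frac{6x}{\pi^2}\cdot\frac{(-1)^{\lfloor L\rfloor}}{2\sqrt{2\pi L}}.
\]

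The main obstacle is justifying the truncation at $k \leq L$ together with the error terms. The Poisson differences $p_{k-1}-p_{k-2}$ near the mode are of size $L^{-1}$ times $p_k$, so their alternating sum is of order $L^{-1/2}$. That is the same order as the claimed main term, so the half-endpoint constant $\tfrac12$ has to be extracted carefully, for instance via Euler--Maclaurin or the alternating-series midpoint estimate. The error term $O(k/L^2)$ in the density also competes, and in a rigorous treatment it would swamp the main term. We therefore treat that error term, and the discarded range $k>L$, as negligible exactly in the heuristic sense asserted in Remark \ref{remark_Omegan_eqk_density_formula_v1}. We flag this as the point where the argument is model-dependent rather than unconditional.
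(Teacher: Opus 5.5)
\textbf{There is a genuine gap, and the statement itself is false.} Your own first step shows why: $\widehat{Q}_{1,n}(x)=\lambda(n)\sum_{j\le x}\mu(j)=\lambda(n)M(x)$. Unconditionally, $M(x)\ll x\exp\bigl(-c\sqrt{\log x}\bigr)$; the paper's introduction even quotes Walfisz's stronger bound. That is $o\bigl(x/\sqrt{\log\log x}\bigr)$, so the claimed asymptotic fails for every $n$.

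The statement is also internally inconsistent. Its right-hand side does not depend on $n$, while $\widehat{Q}_{1,1}(x)=M(x)$ and $\widehat{Q}_{1,2}(x)=-M(x)$. So ``absorbing'' $\lambda(n)$ into the asymptotic changes the statement rather than proving it. No independence model can legitimately produce a conclusion that contradicts the prime number theorem. Some step of the argument must therefore be wrong, not merely non-rigorous.

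\textbf{The wrong step is discarding $k>L$.} The paper's own proof makes the same truncation: it writes exactly your sum over $1\le k\le\log\log x$ and defers its evaluation to Lemma B.3 of the earlier preprint. You follow the same route and fill in the endpoint evaluation, so you inherit the flaw.

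The tail bound $(\log x)^{r-1-r\log r}$ only controls $k>rL$ for fixed $r>1$; at $r=1$ the exponent is $0$. By Erd\H{o}s--Kac, the window $L<k\le rL$ contains asymptotically half of all $j\le x$, so it is far from negligible.

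In your Poisson model the complete alternating sum is
\[
\sum_{k\ge 1}(-1)^k p_{k-1}=-e^{-2L}=-(\log x)^{-2}.
\]
Hence the discarded range contributes $-\frac{(-1)^{\lfloor L\rfloor}}{2\sqrt{2\pi L}}\bigl(1+o(1)\bigr)$, which cancels exactly the half-endpoint term you extracted.

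Equivalently, with (IH-A) and no truncation, your count reads $\sum_{j\le x}\mu(j)\approx\frac{6}{\pi^2}\sum_{j\le x}\lambda(j)$. The prime number theorem gives $\sum_{j\le x}\lambda(j)\ll x\exp\bigl(-c\sqrt{\log x}\bigr)$, so this is tiny.

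Your Euler--Maclaurin (alternating-midpoint) evaluation of the \emph{truncated} sum is fine. The main term $\frac{6x}{\pi^2}\cdot\frac{(-1)^{\lfloor L\rfloor}}{2\sqrt{2\pi L}}$ is entirely an artifact of cutting the alternating sum at the mean of the distribution of $\Omega$.
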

\begin{proof}
The function at hand is used to define the inner sum component of 
equation \eqref{eqn_Gx_exp_starting_point_v1}. 
We compute the sum as follows given the assertions on independence 
stated in Assertion \ref{ansatz_Independence_props_v1} 
\cite[Lemma B.3]{MDS-MERTENS}: 
\begin{align}
\widehat{Q}_{1,n}(x) & \sim \sum_{1 \leq k \leq \log\log x} 
     \frac{(-1)^k (\log\log x)^{k-1}}{ 
     (\log x) (k-1)!} \times Q_{\operatorname{sq}}\left(\frac{x}{n}\right). 
     \qedhere
\end{align}
\end{proof}
\end{subequations}

\begin{theorem} 
\label{theorem_Gx_lowerbounds_x2_v1}
We have that 
\begin{subequations}
\begin{align}
\label{eqn_Gx_prob_interpret_v4}
|G(x)| & \gg \frac{\sqrt{x}}{(\log\log\log x)^{\frac{3}{2}}} \\ 
\label{eqn_Gx_gg_v2_w_cx_1}
|G(x)| & \gg \frac{\sqrt{x\log\log x}}{(\log\log\log x)}.
\end{align}
\end{subequations}
\end{theorem}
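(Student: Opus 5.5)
Starting from equation \eqref{eqn_Gx_exp_starting_point_v1}, we write $G(x)=\sum_{n\leq x}\lambda(n)C_{\Omega}(n)\sum_{j\leq x/n}\lambda(j)\mu^2(j)$. The inner sum is exactly $\lambda(n)\widehat{Q}_{1,n}(x/n)$ up to sign conventions, so $G(x)=\sum_{n\leq x} C_{\Omega}(n)\,\widehat{Q}_{1,n}(x/n)$ with the factor $\lambda(n)^2=1$ absorbed. The plan is to substitute the asymptotic for $\widehat{Q}_{1,n}$ from the preceding theorem. This is justified by Remark \ref{remark_IndepHypConsequence_for_COmegan_v1}: under (IH-A)--(IH-C), the weights $C_{\Omega}(n)$ are independent of the squarefree indicator. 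The result is a main term of the shape
\[
G(x) \approx \frac{6}{\pi^2}\sum_{n\leq x} \frac{x}{n}\, C_{\Omega}(n)\,\frac{(-1)^{\lfloor \log\log (x/n)\rfloor}}{2\sqrt{2\pi\log\log(x/n)}} .
\]
We then group the $n$ according to $\Omega(n)=k$. Following Remark \ref{remark_Omegan_eqk_density_formula_v1}, we keep only $1\leq k\leq\log\log x$ and discard the larger $k$ as negligible. On each layer, $C_{\Omega}(n)$ is bounded below by $1$ and, for squarefree-dominated $n$, equals $k!$. We use the Erd\H{o}s--Kac type density $\frac{(\log\log x)^{k-1}}{(\log x)(k-1)!}$ together with partial summation in $n$ to evaluate each layer.

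The key step is to exhibit large cancellation-free blocks. The sign $(-1)^{\lfloor\log\log(x/n)\rfloor}$ is constant on ranges $x/n\in[e^{e^m},e^{e^{m+1}})$. We therefore restrict $x$ to a subsequence for which the dominant block (the one with $n$ near $\sqrt{x}$, or with $x/n$ small) carries a fixed sign. We then bound the opposite-signed blocks by an alternating-series comparison: successive blocks decrease in size, so the alternating sum is at least a constant times its first term.

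For \eqref{eqn_Gx_prob_interpret_v4}, we take the block where $x/n\asymp\sqrt{x}$ and $\log\log(x/n)\asymp\log\log x$. After the Poisson-weighted $k$-sum collapses to its peak at $k\approx\log\log x$, the resulting contribution should be $\sqrt{x}$ divided by the normalizing factor $\sqrt{\log\log(x/n)}$. Evaluating that factor at the triple-log scale coming from the iterated $\log\log$ of the block endpoints should give the stated loss $(\log\log\log x)^{3/2}$. For \eqref{eqn_Gx_gg_v2_w_cx_1}, we instead use the variance heuristic: treat the blockwise sums as a sum of independent signed contributions with variance $\asymp x\log\log x$, in analogy with the law of the iterated logarithm for $\widebar{M}_x$. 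Along a suitable subsequence this gives $|G(x)|\gg\sqrt{x\log\log x}$, with one factor of $\log\log\log x$ lost to the error terms in the density formula.

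The main obstacle is controlling cancellation between adjacent sign-blocks, since the leading terms are of comparable size and $C_{\Omega}(n)$ can be large. I would first try to show that the block masses are monotone in $m$ and differ by a factor bounded away from $1$. Failing that, I would pass to a subsequence of $x$ where $\lfloor\log\log x\rfloor$ is chosen so that the largest block dominates the rest outright.
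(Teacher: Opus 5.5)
Your proposal has genuine gaps at exactly the steps that are supposed to produce the two bounds.

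\textbf{The treatment of $C_{\Omega}(n)$ fails.} You note that $C_{\Omega}(n)=k!$ on squarefree $n$ with $\Omega(n)=k$. The weighted layer mass is then $k!\cdot\frac{(\log\log x)^{k-1}}{(\log x)(k-1)!}=\frac{k(\log\log x)^{k-1}}{\log x}$, which increases with $k$. So there is no Poisson peak at $k\approx\log\log x$. The range $k>\log\log x$ that you discard via the density remark is also not negligible: at $k=r\log\log x$ the saving $(\log x)^{r-1-r\log r}$ is swamped by $k!$, which beats every fixed power of $\log x$. Globally, $(1-P(s))^{-1}$ has its pole at $\rho_0\approx 1.399$, where $P(\rho_0)=1$, so $\sum_{n\le N}C_{\Omega}(n)\asymp N^{\rho_0}$. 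Your main term $\sum_{n}\frac{x}{n}C_{\Omega}(n)(\cdots)$ is therefore dominated by $n$ near $x$, i.e.\ by $x/n=O(1)$. There $\log\log(x/n)$ is not even positive, and the asymptotic for $\widehat{Q}_{1,n}(x/n)$ says nothing.

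\textbf{The sizes are asserted rather than computed.} Because $C_{\Omega}\ge 1$, the sign-constant block containing $n\asymp\sqrt{x}$ already has mass $\gg x/\sqrt{\log\log x}$, since $\sum_{\sqrt{x}\le n\le 2\sqrt{x}}x/n\asymp x$. That is not $\sqrt{x}$, so a bound of order $\sqrt{x}$ could only come from the inter-block cancellation you explicitly leave unresolved. Even granting your figure $\sqrt{x}/\sqrt{\log\log(x/n)}$ with $\log\log(x/n)\asymp\log\log x$, that is \emph{smaller} than the target $\sqrt{x}/(\log\log\log x)^{3/2}$. No triple logarithm arises anywhere in your setup.

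\textbf{The second bound and the range of $x$.} The variance/iterated-logarithm heuristic is not supported by (IH-A)--(IH-C), which give no independence between blocks and no variance $\asymp x\log\log x$. The ``one factor of $\log\log\log x$ lost to error terms'' is unexplained. Moreover, both the restriction to a subsequence and the iterated-logarithm heuristic yield only infinitely-often statements. The theorem asserts $|G(x)|\gg\cdots$ for all large $x$.

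\textbf{How the paper proceeds.} The paper sidesteps cancellation rather than controlling it. It replaces $\widehat{Q}_{1,n}(x/n)$ by an ``average order'' evaluated at $x$ with an inserted factor $\frac{1}{x}$, so that $(-1)^{\lfloor\log\log x\rfloor}/\sqrt{\log\log x}$ factors out of the entire sum. It replaces the $C_{\Omega}$-weighted count over $\Omega(n)=k$ by the multinomial-theorem average $\pi(x^{1/k})^k$, summing over all $k\le x$ with no truncation at $\log\log x$. What remains is the single positive series $\sum_{k}\frac{k^k(\log\log x)^{k-1}}{(\log x)^k(k-1)!}$. Stirling's formula and an error-function evaluation in $b_x=e\log\log x/\log x$ are where the paper's $(\log\log\log x)^{3/2}$ comes from. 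The second bound comes from keeping the Stirling factor $\sqrt{k}$, i.e.\ the mean-value factor $\widehat{B}(x)\sim\log\log x$, not from any iterated-logarithm argument.

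\textbf{A caution about the substitution.} This applies to your substitution step and equally to the paper's. Since $\lambda(j)\mu^2(j)=\mu(j)$ for every $j$, one has exactly $\widehat{Q}_{1,n}(y)=\lambda(n)M(y)$. The asymptotic you plug in would then force $|M(y)|\asymp y/\sqrt{\log\log y}$, contradicting the Walfisz bound quoted in the introduction. A main term built on that substitution cannot be made rigorous as it stands.
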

\begin{proof}[Remarks]
We observe that the exponent in equation 
\eqref{eqn_Gx_prob_interpret_v4} 
also appears in the analysis of other 
upper bounds on $M(x)$ \cite[\cf Theorem 1]{NG-MERTENS}. That is, 
under certain conjectured hypotheses, 
Ng proved that $M(x) \ll \sqrt{x} (\log\log x)^{\frac{3}{2}}$. 
This coincidence is encouraging to find naturally appearing in the next 
proofs of the lower bound asymptotics of $G(x)$ which characterize  
key exact formulas for $M(x)$ from \cite{MDS-MERTENS}. 
\end{proof}
\begin{proof}[Proof of \eqref{eqn_Gx_prob_interpret_v4}]
We assume the value $\Omega(n) = k$ is equally likely to occur at any given 
$n \leq x$, or at any particular $1 \leq \frac{x}{n} \leq x$, 
according to the density formulas for $\Omega(n)$ summarized in 
Remark \ref{remark_Omegan_eqk_density_formula_v1}. 
This assumption allows us to take the average order of 
$\widehat{Q}_{1,n}\left(\frac{x}{n}\right)$, done by effectively 
inserting a scaling factor of $\frac{1}{x}$ which 
represents the average of the function summed over $n \leq x$, 
to sum \eqref{eqn_Gx_exp_starting_point_v1}. 
Furthermore, because $C_{\Omega}(n)$ is a multinomial theorem 
coefficient variant depending 
only on $\Omega(n) := k$ and the distinct prime powers dividing $n$, 
we find that 
(\cf Remark \ref{remark_IndepHypConsequence_for_COmegan_v1}) 
\begin{subequations}
\begin{align}
G(x) & = \sum_{n \leq x} C_{\Omega}(n) 
     \widehat{Q}_{1,n}\left(\frac{x}{n}\right) && \\ 
     & = \frac{1}{x} \times \sum_{n,k \leq x} \pi\left(x^{1/k}\right)^k 
     \widehat{Q}_{1,n}\left(\frac{x}{n}\right) && 
     \mathtext{\ \ (on average, multinomial theorem)} \\ 
     & \sim \frac{3x (-1)^{\lfloor \log\log x\rfloor}}{\sqrt{2\pi^5 \log\log x}} \times 
     \sum_{k \leq x} \frac{k^k (\log\log x)^{k-1}}{(\log x)^{k} (k-1)!} && \\ 
     & \gg \frac{x}{\sqrt{\log\log x}} \times 
     \sum_{k \leq x} \frac{(\log\log x)^{k-1} e^k}{(\log x)^{k} \sqrt{k}}. && 
     \mathtext{\ \ (Stirling's approximation)} 
\end{align}
Let $b_x := \frac{(\log\log x)e}{\log x}$. As $x \rightarrow \infty$, 
we have that 
\begin{align}
\sum_{k \leq x} \frac{(\log\log x)^{k-1} e^k}{(\log x)^{k} \sqrt{k}} & \sim 
     \frac{\sqrt{\pi}}{(\log\log x) \log b_x} \operatorname{erf}\left( 
     \sqrt{x \log b_x}\right) \\ 
     & \sim \frac{1}{(\log\log\log x)}\left(\sqrt{\pi} - 
     \frac{e^{-x \log b_x}}{\sqrt{x \log b_x}}\right) \\ 
     & \gg \frac{\sqrt{\log\log x}}{\sqrt{x} (\log\log\log x)^{\frac{3}{2}}}. 
     \qedhere
\end{align}
\end{subequations}
\end{proof}
\begin{proof}[Proof of \eqref{eqn_Gx_gg_v2_w_cx_1}]
\label{remark_classical_tighter_bounds_v1}
By the mean value theorem \cite[\S 1.4]{NISTHB} and the 
Euler-Maclaurin summation formula, 
for some $\widehat{B}(x) \leq x$, we can write 
\[
\sum_{k \leq x} \frac{\sqrt{k} (\log\log x)^{k-1} e^k}{(\log x)^{k}} \sim 
     \widehat{B}(x) \times \sum_{k \leq x} \frac{(\log\log x)^{k-1} e^k}{ 
     (\log x)^{k} \sqrt{k}}, \mathtext{\ as\ } x \rightarrow\infty. 
\]
It is not difficult to prove that $\widehat{B}(x) \sim \log\log x$ as $x \rightarrow \infty$. 
In fact, repeating the steps above, we find that
\begin{equation}
\label{eqn_Gx_sim_and_second_lower_bound_v1}
|G(x)| \sim \frac{3x \sqrt{\log\log x}}{\sqrt{2\pi^5} (\log\log\log x)} \left( 
     \sqrt{\pi} - 
     \frac{e^{-x \log b_x}}{\sqrt{x \log b_x}}\right) 
     \gg \frac{\sqrt{x \log\log x}}{(\log\log\log x)}. 
\end{equation}

We cannot tighten the lower bound 
more than in the equation above 
(\cf Remark \ref{remark_classical_tighter_bounds_v2}). 
\end{proof}

\subsection{Asymptotic bounds on $M(x)$ with applications to classical conjectures}
\label{subSection_BoundsOnMxUsingGx}

We will use the asymptotic formulas for $G(x)$ 
established in the last section with the prime sum identity that 
exactly defines $M(x)$ in \eqref{eqn_Mx_Gx_PartialSumGxOverp_v2}. 
The Abel summation formula and Euler-Maclaurin summation bounds and 
integration by parts shows that 
\begin{subequations}
\begin{align}
\label{eqn_Mx_prob_Gx_interpret_v5}
M(x) & \sim G(x) + x \times \int G^{\prime}(x) \frac{dx}{x} \sim 2G(x).
\end{align}
\end{subequations}

\begin{subequations}

\begin{theorem}[Approaching classical conjectures]
\label{theorem_Mx_classical-type_bounds_v1}
There are absolute constants $A_1, A_2 > 0$ such that
\begin{equation}
\limsup_{x \rightarrow \infty} \frac{|M(x)| 
     (\log\log\log x)^{\frac{3}{2}}}{\sqrt{x}} \in \left\{A_1, +\infty\right\}, 
\end{equation}
and 
\begin{equation}
\limsup_{x \rightarrow \infty} \frac{|M(x)| 
     (\log\log\log x)}{\sqrt{x \log\log x}} \ \ \in \left\{A_2, +\infty\right\}, 
\end{equation}
\end{theorem}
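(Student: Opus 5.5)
The plan is to get both limit-supremum statements directly from the lower bounds on $|G(x)|$ in Theorem \ref{theorem_Gx_lowerbounds_x2_v1}, carried over to $M(x)$ through the prime-sum identity \eqref{eqn_Mx_Gx_PartialSumGxOverp_v2} in the averaged form \eqref{eqn_Mx_prob_Gx_interpret_v5}. Concretely, we will take $M(x) \sim 2G(x)$ along the same large $x$ used in the proof of Theorem \ref{theorem_Gx_lowerbounds_x2_v1}. Here the parity factor $(-1)^{\lfloor \log\log x\rfloor}$ is constant on long blocks of $x$. On such a block we sum $G(x/p)$ over $p \leq x$ with Abel summation against $\pi(t)$. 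The terms with $x/p$ in the same parity block keep the sign of $G(x)$, and the remaining ranges of $p$ will be shown to be lower order. In this way the sign of $G$ does not cancel the main term $G(x)$.

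First, from \eqref{eqn_Gx_prob_interpret_v4} together with $M(x) \sim 2G(x)$, there is an absolute $c>0$ with $|M(x)| \geq c\sqrt{x}\,(\log\log\log x)^{-3/2}$ for all sufficiently large $x$ in the subsequence. This gives
\[
\limsup_{x\to\infty} \frac{|M(x)|(\log\log\log x)^{3/2}}{\sqrt{x}} \geq c > 0 .
\]
Second, the dichotomy: the limsup is a well-defined element of $[c,+\infty]$. If it is finite, we call it $A_1$, and $A_1 \geq c > 0$. Otherwise it equals $+\infty$. Absoluteness of $A_1$ will follow because the quantity depends on no parameters. Repeating the same two steps with \eqref{eqn_Gx_gg_v2_w_cx_1}, or with the sharper asymptotic \eqref{eqn_Gx_sim_and_second_lower_bound_v1}, gives the second statement with a constant $A_2$. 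Both lower bounds come from the same procedure, so the only difference is the normalization $\sqrt{x\log\log x}/\log\log\log x$.

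The main obstacle will be the transfer step $M(x)\sim 2G(x)$. We must control $\sum_{p\leq x} G(x/p)$ without the sign changes of $G$ at smaller arguments destroying the lower bound. To do this I would split the range of $p$ at $p \leq \exp(\sqrt{\log x})$, where $\log\log(x/p)=\log\log x + o(1)$, so that the parity and therefore the sign are preserved for most such $p$. On this range I would use $\sum_{p\le y}1/p\approx\log\log y$ to show that the contribution has the same sign as $G(x)$. For the larger primes, I would bound the contribution trivially using the upper size of $G$ from \eqref{eqn_Gx_sim_and_second_lower_bound_v1}. If this splitting fails, the fallback is to take limsup only along the subsequence where the two summands in \eqref{eqn_Mx_Gx_PartialSumGxOverp_v2} have the same sign. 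Positivity of the limsup only requires an infinite subsequence, so this weaker form still suffices.
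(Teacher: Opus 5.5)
Your core derivation is the paper's own. The theorem is read off by inserting the lower bounds \eqref{eqn_Gx_prob_interpret_v4} and \eqref{eqn_Gx_gg_v2_w_cx_1} of Theorem \ref{theorem_Gx_lowerbounds_x2_v1} into $M(x)\sim 2G(x)$ from \eqref{eqn_Mx_prob_Gx_interpret_v5}. The form ``$\in\{A_i,+\infty\}$'' asserts only that each limsup is positive, and you read it that way. The paper offers nothing for the transfer beyond a one-line appeal to Abel and Euler--Maclaurin summation, so citing \eqref{eqn_Mx_prob_Gx_interpret_v5} puts you on exactly the paper's footing. The gap is in the justification you add for that transfer, which you yourself call the main obstacle: as sketched, it would not work.

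\textbf{The cut at $y_0$.} Splitting at $y_0=\exp(\sqrt{\log x})$ does not leave a lower-order remainder. Both $\sum_{p\le y_0}1/p$ and $\sum_{y_0<p\le x}1/p$ are $\sim\frac12\log\log x$. If $|G(y)|$ has the size $y\sqrt{\log\log y}/\log\log\log y$ displayed in \eqref{eqn_Gx_sim_and_second_lower_bound_v1}, the trivial bound on the large primes is of the same order as your same-sign main term and can cancel it. With the normalization $\sqrt{y\log\log y}/\log\log\log y$ of Remark \ref{remark_classical_tighter_bounds_v2} it is worse: the trivial bound is then roughly $\sqrt{x}\sum_{y_0<p\le x}p^{-1/2}\asymp x/\log x$, far above the same-sign contribution $\sqrt{x}\,e^{O(\sqrt{\log x})}$.

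\textbf{What a dominant same-sign range would give.} If such a range did dominate, it would make $\sum_{p\le x}G(x/p)$ of strictly larger order than $G(x)$, since it carries harmonic mass $\asymp\log\log x$. You would then be proving $|M(x)|\ge|G(x)|$, not $M(x)\sim 2G(x)$. Worse, at the size in \eqref{eqn_Gx_sim_and_second_lower_bound_v1} either conclusion forces $|M(x)|>x$ for large $x$, which is impossible. So that ``upper size of $G$'' cannot be combined with the transfer at all.

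\textbf{The fallback.} It presupposes infinitely many $x$ at which $G(x)$ and $\sum_{p\le x}G(x/p)$ have the same sign. Neither the paper nor your proposal provides such $x$. The opposite-sign case is exactly where cancellation could make $M(x)$ small, so the fallback assumes what has to be shown.

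Be aware also that a valid proof of the second assertion would disprove the Gonek conjecture quoted in the introduction. Under that conjecture the second limsup equals $0$, because $(\log\log\log x)^{5/4}=o\bigl(\sqrt{\log\log x}/\log\log\log x\bigr)$.
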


\begin{remark}[More precise corollaries]
\label{remark_classical_tighter_bounds_v2}
We expect that the first limit in 
Theorem \ref{theorem_Mx_classical-type_bounds_v1} is in fact unbounded 
growing to $+\infty$. We also expect that the 
second limit evaluates to the bounded absolute constant, $A_2 > 0$, 
rather than growing unboundedly. 
Our argument follows from the 
proof of \eqref{eqn_Gx_gg_v2_w_cx_1} and the asymptotic expansion of 
the error function as $x \rightarrow \infty$. 
In particular, we can show that 
\[
G(x) \gg \frac{\sqrt{x \log\log x}}{\log\log\log x}. 
\]
For any strictly increasing $f(x) \rightarrow +\infty$ as 
$x \rightarrow \infty$, we have
\[
\frac{\sqrt{x \log\log x} f(x)}{\log\log\log x} \gg 
     \frac{\sqrt{x \log\log x}}{\log\log\log x}. 
\]
The reverse strict lower bound similarly holds if we replace $f$ by a 
(non-constant) monotone decreasing $g(x) = o(1)$ as $x \rightarrow \infty$.
Hence, in fact we have boundedness of the second limit in the following form 
(\cf equation \eqref{eqn_Gx_sim_and_second_lower_bound_v1} to obtain the 
precise constant value): 
\begin{equation}
\label{eqn_LimsupMx_MDS_eq_constant_v2}
\limsup_{x \rightarrow \infty} \frac{|M(x)| (\log\log\log x)}{ 
     \sqrt{x \log\log x}} = 
     \frac{6}{\sqrt{2 \pi^5}} \approx 0.242528 < +\infty. 
\end{equation}
We compare the result in the previous equation to the limiting scaling function 
given by Gonek's famous conjecture stated in the introduction which is 
hypothesized to evaluate precisely to a constant limit. 
That is, the scaling function of $|M(x)|$ in equation 
\eqref{eqn_LimsupMx_MDS_eq_constant_v2} 
captures the limiting growth of the 
unboundedness of the Mertens function precisely up to a constant. 
\end{remark}

\begin{remark}[Most likely, in all probability, the result that 
               takes the classically conjectured cake]
Moreover, and finally, 
because $\log\log x \gg \log\log\log x$ as $x \rightarrow \infty$, the 
truth of equation \eqref{eqn_LimsupMx_MDS_eq_constant_v2} 
implies that the classical 
unboundedness conjecture holds in the following form:
\begin{equation}
\limsup_{x \rightarrow \infty} \frac{|M(x)|}{\sqrt{x}} = +\infty. 
\end{equation}
If we keep the signed terms in the proofs of 
Theorem \ref{theorem_Gx_lowerbounds_x2_v1}, 
we should also be able to recover both of the signed versions of the 
unboundedness of $M(x)/\sqrt{x}$. 
We remark that Ingham's 
proof of these two results published in 1942 on the limiting growth of 
$M(x) / \sqrt{x}$ towards unboundedness, respectively as $\pm \infty$ in the 
limit supremum and limit infimum senses, 
is conditioned on conjectured hypotheses about the linear independence 
of the non-trivial simple zeros of $\zeta(s)$ \cite{INGHAM}. 
\end{remark}

\end{subequations}

\section{Conclusions}

We extended results in \cite{MDS-MERTENS} that 
relate the partial sums of the M\"{o}bius function, $M(x)$, 
to arithmetic functions that 
have deep connections between the distributions of the 
canonically additive functions, $\omega(n)$ and $\Omega(n)$, as well as the 
prime counting function, $\pi(x)$. 
Given reasonable probabilistic assumptions about the 
independence of the events that 
$\Omega(n) = k$ and $\mu^2(n) \neq 0$ for $n \leq x$ and $k \leq \log\log x$ as 
$x \rightarrow \infty$, we proved variants of the classical conjectures on the 
Mertens function from the introduction as 
Theorem \ref{theorem_Mx_classical-type_bounds_v1} 
(\cf Remark \ref{remark_classical_tighter_bounds_v2}). 
The approach to probabilistic assumptions in this manuscript may hopefully 
be made fully rigorous by other methods, 
say accurate to within some small error terms that vanish or are 
negligible as $x \rightarrow \infty$, 
so that we may treat Theorem \ref{theorem_Mx_classical-type_bounds_v1} 
to hold absolutely without the independence hypotheses.

The logarithm of $C_{\Omega}(n)$ satisfies an 
Erd\'{o}s-Kac-type central limit theorem that converges in distribution to the 
smooth standard normal CDF, 
$\Phi(z)$, for $n \leq x$ as $x \rightarrow \infty$ where 
\cite[\cf Theorem 1.8]{MDS-MERTENS} 
\[
\Phi(z) := \frac{1}{\sqrt{2\pi}} \times \int_{-\infty}^z e^{-\frac{t^2}{2}} dt, 
     \mathrm{\ for\ any\ } z \in (-\infty, \infty). 
\]
Central limit type theorems in analog to the famous Erd\'{o}s-Kac theorem 
for $\omega(n)$, or often times for $\Omega(n)$, 
reflect a probabilistic approach to studying the 
distributions of many (typically additive) number theoretic functions that 
satisfy usually reasonable properties over the primes 
\cite{ELLIOTT-V1,ELLIOTT-V2,TENENBAUM-PROBNUMT-METHODS}. 
This approach reflects Kac's original intuition 
about the spread of $\omega(n)$  
which was subsequently made formally rigorous 
by Erd\'{o}s who used sieves to formalize his proof of the theorem 
\cite{BILLINGSLY-CLT-PRIMEDIVFUNC,ERDOS-KAC-REF}. 
Certain Erd\'{o}s-Kac-type central limit theorems for arithmetic 
functions converge in distribution as $x \rightarrow \infty$ to a 
CDF that is not standard normal 
\cite{ELLIOTT-V1,ELLIOTT-V2,TENENBAUM-PROBNUMT-METHODS}. 
We surmise that the properties of an alternate probability distribution 
may be used to alter or vary asymptotic bounds we obtained in this article. 

%


\renewcommand{\em}{\it}


\end{document}